\def\NZQ{\mathbb}               
\def\QQ{{\NZQ Q}}
\def\ZZ{{\NZQ Z}}
\def\frk{\mathfrak}               
\def\Phi{{\frk N}}
\def\opn#1#2{\def#1{\operatorname{#2}}} 
\opn\chara{char} 
\opn\length{\ell} 
\opn\pd{pd} 
\opn\rk{rk}
\opn\projdim{proj\,dim} 
\opn\injdim{inj\,dim} 
\opn\rank{rank}
\opn\depth{depth} 
\opn\grade{grade} 
\opn\height{height}
\opn\embdim{emb\,dim} 
\opn\codim{codim}
\opn\Tr{Tr} 
\opn\bigrank{big\,rank}
\opn\superheight{superheight}
\opn\lcm{lcm}
\opn\trdeg{tr\,deg}
\opn\reg{reg} 
\opn\lreg{lreg} 
\opn\ini{in} 
\opn\lpd{lpd}
\opn\size{size}
\opn\mult{mult}
\opn\dist{dist}
\opn\cone{cone}
\opn\lex{lex}
\opn\rev{rev}
\opn\div{div} \opn\Div{Div} \opn\cl{cl} \opn\Cl{Cl}
\opn\Spec{Spec} \opn\Supp{Supp} \opn\supp{supp} \opn\Sing{Sing}
\opn\Ass{Ass} \opn\Min{Min}
\opn\Ann{Ann} \opn\Rad{Rad} \opn\Soc{Soc}
\opn\Syz{Syz} \opn\Im{Im} \opn\Ker{Ker} \opn\Coker{Coker}
\opn\Am{Am} \opn\Hom{Hom} \opn\Tor{Tor} \opn\Ext{Ext}
\opn\End{End} \opn\Aut{Aut} \opn\id{id} \opn\ini{in}
\opn\nat{nat}
\opn\pff{pf}
\opn\Pf{Pf} \opn\GL{GL} \opn\SL{SL} \opn\mod{mod} \opn\ord{ord}
\opn\Gin{Gin}
\opn\Hilb{Hilb}\opn\adeg{adeg}\opn\std{std}\opn\ip{infpt}
\opn\Pol{Pol}
\opn\sat{sat}
\opn\Var{Var}
\opn\Gen{Gen}
\opn\aff{aff} \opn\con{conv} \opn\relint{relint} \opn\st{st}
\opn\lk{lk} \opn\cn{cn} \opn\core{core} \opn\vol{vol}
\opn\link{link} \opn\star{star}
\opn\gr{gr}
\def\vol{{\textnormal{vol}}}
\def\conv{{\textnormal{conv}}}
\def\pot#1#2{#1[\kern-0.28ex[#2]\kern-0.28ex]}
\opn\dirlim{\underrightarrow{\lim}}
\opn\inivlim{\underleftarrow{\lim}}
\def\Implies{\ifmmode\Longrightarrow \else
	\unskip${}\Longrightarrow{}$\ignorespaces\fi}
\def\implies{\ifmmode\Rightarrow \else
	\unskip${}\Rightarrow{}$\ignorespaces\fi}
\def\iff{\ifmmode\Longleftrightarrow \else
	\unskip${}\Longleftrightarrow{}$\ignorespaces\fi}
\newtheorem{Theorem}{Theorem}
\newtheorem{Lemma}[Theorem]{Lemma}
\newtheorem{Corollary}[Theorem]{Corollary}
\newtheorem{Proposition}[Theorem]{Proposition}
\newtheorem*{acknowledgement}{Acknowledgment}
\let\epsilon\varepsilon
\let\phi=\varphi
\let\kappa=\varkappa
\def\qed{\ifhmode\textqed\fi
	\ifmmode\ifinner\quad\qedsymbol\else\dispqed\fi\fi}
\def\textqed{\unskip\nobreak\penalty50
	\hskip2em\hbox{}\nobreak\hfil\qedsymbol
	\parfillskip=0pt \finalhyphendemerits=0}
\def\dispqed{\rlap{\qquad\qedsymbol}}
\opn\dis{dis}
\opn\height{height}
\opn\dist{dist}
\def\pnt{{\raise0.5mm\hbox{\large\bf.}}}
\opn\Lex{Lex}
\opn\conv{conv}
\begin{document}
\title{On the sign patterns of the coefficients of Hilbert polynomials}
\author[A. Tsuchiya]{Akiyoshi Tsuchiya}
\address[Akiyoshi Tsuchiya]{Department of Pure and Applied Mathematics,
	Graduate School of Information Science and Technology,
	Osaka University,
	Suita, Osaka 565-0871, Japan}
\email{a-tsuchiya@cr.math.sci.osaka-u.ac.jp}

\subjclass[2010]{05E40, 13A02}
\date{}
\keywords{Hilbert polynomial, Hilbert function, standard graded $k$-algebra}

\begin{abstract}
 We show that all sign patterns of the coefficients of Hilbert polynomials of standard graded $k$-algebras
 are possible.
\end{abstract} 

\maketitle
The purpose of this paper is to investigate which sign patterns of the coefficients of Hilbert polynomials of standard graded commutative $k$-algebras are possible.
This was asked by Mark Haiman (\cite{Jesus}).
In this paper, we give a complete answer to this question.

Let $k$ be a field and $R$ a finitely generated graded $k$-algebra.
We call 
$$H(R;i):=\text{dim}_k(R_i)$$
the \textit{Hilbert function} of $R$,
where $R_i$ is the graded component of $R$ of degree $i$.
We say that a $k$-algebra $R$ is \textit{standard} if it can be
finitely generated as a $k$-algebra by elements of $R_1$.
It is known that if $R$ is standard, then there exist a polynomial
$P_R(x) \in \QQ[x]$ and a positive integer $N$ such that $H(R;i)=P_R(i)$ for all $i \geq N$ (\cite{Hil}).
We call $P_R(x)$ the \textit{Hilbert polynomial} of a standard graded $k$-algebra $R$.
We refer the readers to \cite{stan} for an introduction to the theory of standard graded $k$-algebras and Hilbert polynomials.

Let $f(x)$ be a Hilbert polynomial and $d=\text{deg}(f(x))$.
The leading coefficient, that is, the coefficient of $x^d$ of $f(x)$ is positive.
On the other hand, other coefficients of $f(x)$ are not necessarily positive.
We recall that the \textit{sign function} of a real number $x$ is defined as follows:
\begin{displaymath}
\text{sgn}(x):=\left\{
\begin{aligned}
&-1 &\text{if} \ x<0,\\
&0 &\text{if} \ x=0,\\
&1 &\text{if} \ x>0.\\
\end{aligned}
\right.
\end{displaymath}
Let $f(x)=\sum_{i=0}^{d}a_ix^i$.
For a polynomial $f(x)$, we set 
$$S(f(x))=(\text{sgn}(a_0),\ldots,\text{sgn}(a_{d-1})) \in \{-1,0,1\}^d.$$

In this paper, we show that all sign patterns of the coefficients of Hilbert polynomials
are possible.
In other words,
given an integer $d \geq 1$ and $s \in \{-1,0,1\}^d$,
there exists a Hilbert polynomial $f(x)$ of degree $d$ such that
$S(f(x))=s$.
In order to prove this question, we consider when a polynomial $f(x) \in \ZZ[x]$ is a Hilbert polynomial.
In fact, we will show the following theorem. 
 \begin{Theorem}
	\label{thm}
	Let $d$ be a positive integer and $a_0,\ldots,a_{d-1}$ integers.
	We set $N=\min\{a_0,\ldots,a_{d-1}\}$.
	Then the followings hold:
	\begin{enumerate}
		\item If $N \geq -2$, then for any integer $n \geq 5d+3$,
		$nx^d+a_{d-1}x^{d-1}+\cdots+a_0$ is a Hilbert polynomial;
		\item If $N < -2$, then for any integer $n \geq -10d\lfloor N/5 \rfloor+3$,
		$nx^d+a_{d-1}x^{d-1}+\cdots+a_0$ is a Hilbert polynomial.
	\end{enumerate}
\end{Theorem}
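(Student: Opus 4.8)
The plan is to rely on Macaulay's classical description of which integer-valued polynomials occur as Hilbert polynomials. Recall that a polynomial $f$ of degree $d$ is the Hilbert polynomial of a standard graded $k$-algebra if and only if it admits a \emph{Macaulay representation}
\[
f(x)=\sum_{i=1}^{s}\binom{x+b_i-i+1}{b_i},\qquad b_1\ge b_2\ge\cdots\ge b_s\ge 0,
\]
in which case necessarily $b_1=d$; the sufficiency of such a representation is the constructive direction, since the associated lex-segment ideal realizes $f$. Thus, for $f(x)=nx^d+a_{d-1}x^{d-1}+\cdots+a_0$, it suffices to exhibit a weakly decreasing sequence $(b_i)$ of this shape. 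I would reparameterize by the multiplicities $c_v=\#\{i:b_i=v\}$ for $0\le v\le d$, so that a candidate representation is simply a choice of nonnegative integers $c_0,\dots,c_d$ with $c_d\ge 1$.

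Writing $s_v=\sum_{w>v}c_w$ for the number of indices preceding the block of value $v$, the contribution of that block is the \emph{run}
\[
R_v(x)=\sum_{\sigma=s_v}^{s_v+c_v-1}\binom{x+v-\sigma}{v}=\binom{x+v-s_v+1}{v+1}-\binom{x+v-s_v-c_v+1}{v+1},
\]
so that $f=\sum_{v=0}^{d}R_v$. The key structural point is that $R_v$ has degree $v$, leading coefficient $c_v/v!$, and contributes nothing to the coefficients of $x^{v+1},\dots,x^d$. Hence matching the coefficients of $x^d,x^{d-1},\dots,x^0$ is a \emph{triangular} system: the top equation forces $c_d=n\cdot d!$, and thereafter each $c_v$ is determined successively from $a_v$ and the already-computed $c_{v+1},\dots,c_d$ through the explicit hockey-stick formula for the lower coefficients of $R_{v+1},\dots,R_d$. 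In particular the representation, if it exists, is unique, and the theorem collapses to one question: are the uniquely determined rationals $c_0,\dots,c_d$ actually \emph{nonnegative integers}?

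The main obstacle is precisely this nonnegativity. Integrality should follow from $f$ having integer coefficients together with the integrality built into the binomial runs, and I would confirm it by induction along the triangular recursion (checking the relevant congruences, exactly as one does by hand for $d=1,2$). Nonnegativity is the substantive part. Since $c_d=n\cdot d!$ is large, the shifts $s_v$ are large and the higher runs $R_{v+1},\dots,R_d$ contribute large lower-order terms to the coefficient of $x^v$; these alternate in sign with the codimension $d-v$, and one must show that, after solving $c_v=v!\bigl(a_v-\sum_{w>v}[x^v]R_w\bigr)$ (where $[x^v]R_w$ denotes the coefficient of $x^v$ in $R_w$), the result stays nonnegative. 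When the $a_v$ are only mildly negative ($N\ge -2$) a uniform estimate suffices and yields the clean threshold $n\ge 5d+3$; when some $a_v$ is very negative ($N<-2$) the compensation needed at the lowest levels grows with $|N|$, and tracking these estimates carefully produces the linear-in-$N$ bound $n\ge -10d\lfloor N/5\rfloor+3$. I expect the bulk of the work, and the source of the explicit constants, to lie in making these lower-order run estimates uniform in $v$ and $d$. As an independent and more elementary check, one can use that Hilbert polynomials are closed under addition: if $R,S$ are standard graded then the fiber product $R\times_k S$ is standard graded with $H(R\times_k S;i)=H(R;i)+H(S;i)$ for $i\ge 1$, hence $P_{R\times_k S}=P_R+P_S$, which lets one assemble $f$ from hypersurface Hilbert polynomials $\binom{x+d+1}{d+1}-\binom{x+d+1-e}{d+1}$ that already supply the needed negative lower coefficients.
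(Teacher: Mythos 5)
Your proposal correctly identifies Macaulay's characterization as a viable route and correctly reduces the problem to showing that the uniquely determined multiplicities $c_0,\dots,c_d$ are nonnegative integers (equivalently, that the Macaulay parameters $m_0,\dots,m_d$ of Lemma \ref{Mc} are weakly decreasing and nonnegative). But the argument stops exactly where the work begins: the nonnegativity of the $c_v$, which you yourself call ``the substantive part,'' is never established. You do not compute or bound the lower-order coefficients $[x^v]R_w$, you do not carry out the ``uniform estimate'' that is supposed to yield the threshold $n\ge 5d+3$, and you do not explain how the bound $-10d\lfloor N/5\rfloor+3$ would emerge from tracking the compensation at the lowest levels. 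As it stands, the explicit constants in the statement --- which are the entire content of the theorem --- are asserted rather than derived. (Integrality, by contrast, is not a genuine difficulty: since $f$ has integer coefficients, $f(\ZZ)\subseteq\ZZ$, and Lemma \ref{Mc} already guarantees that the $m_i$ are integers.)

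The paper's proof sidesteps these estimates entirely. It observes that $(5x-5)(dx^{d-1}+\cdots+2x+1)=5dx^d-5x^{d-1}-\cdots-5$ is a Hilbert polynomial --- a product of Hilbert polynomials, by Lemma \ref{key} together with Proposition \ref{monomial} and the fact that $2x+1$ and $5x-5$ are Hilbert polynomials --- and then writes $nx^d+a_{d-1}x^{d-1}+\cdots+a_0$ as this polynomial (or $2M$ copies of it when $N<-2$, with $M=-\lfloor N/5\rfloor$) plus the monomials $(a_i+5)x^i$ (resp.\ $(a_i+10M)x^i$), whose coefficients are at least $3$ and which are therefore Hilbert polynomials by Proposition \ref{monomial}, plus a leftover monomial $(n-5d)x^d$ (resp.\ $(n-10dM)x^d$). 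The constants $5d+3$ and $-10d\lfloor N/5\rfloor+3$ fall out of this bookkeeping. Your closing remark about assembling $f$ from hypersurface Hilbert polynomials via closure under addition is in fact much closer in spirit to the paper's argument than your main plan, but it too is left as a one-sentence sketch. To complete a proof along either of your lines you would need to supply the missing quantitative step.
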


By this theorem, we obtain the following corollary.
\begin{Corollary}
	\label{cor}
	For any $d \geq 1$  and $s=(s_0,\ldots,s_{d-1}) \in \{-1,0,1\}^d$,
	there exists a Hilbert polynomial $f(x) \in \ZZ[x]$ of degree $d$ such that
	$S(f(x))=s$.
\end{Corollary}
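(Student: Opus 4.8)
The plan is to deduce the Corollary directly from Theorem~\ref{thm}, which supplies essentially all of the content: once the freedom in the lower-order coefficients is available, realizing an arbitrary sign pattern becomes a matter of picking convenient integer representatives. Given $d \geq 1$ and a target pattern $s = (s_0,\ldots,s_{d-1}) \in \{-1,0,1\}^d$, I would first choose integers $a_0,\ldots,a_{d-1}$ whose signs match $s$, taking the smallest possible magnitudes so as to control the quantity $N=\min\{a_0,\ldots,a_{d-1}\}$ appearing in the Theorem.

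Concretely, for each $i \in \{0,\ldots,d-1\}$ I would simply set $a_i = s_i$; that is, $a_i=1$ if $s_i=1$, $a_i=0$ if $s_i=0$, and $a_i=-1$ if $s_i=-1$. Then $\text{sgn}(a_i)=s_i$ for every $i$ by construction, and moreover $N=\min\{a_0,\ldots,a_{d-1}\} \geq -1$. In particular $N \geq -2$, so one lands in case~(1) of Theorem~\ref{thm} rather than the more delicate case~(2); this is the one point worth checking, since choosing representatives of absolute value larger than $2$ could force the regime $N<-2$ and impose the larger bound on the leading coefficient.

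With $N \geq -2$ established, Theorem~\ref{thm}(1) guarantees that for the specific choice $n=5d+3$ (a positive integer, so the degree is genuinely $d$) the polynomial
\[
f(x) = n x^d + a_{d-1}x^{d-1} + \cdots + a_0
\]
is a Hilbert polynomial. By construction $\deg f = d$ and $S(f(x)) = (\text{sgn}(a_0),\ldots,\text{sgn}(a_{d-1})) = (s_0,\ldots,s_{d-1}) = s$, which is exactly the assertion.

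The main obstacle is not in this deduction at all---the argument above is immediate---but is entirely absorbed into Theorem~\ref{thm}, whose proof must actually exhibit standard graded $k$-algebras (or verify the combinatorial criteria characterizing Hilbert polynomials) realizing each prescribed polynomial with uniformly bounded leading coefficient. The only genuine choice in the Corollary is to keep all lower coefficients in $\{-1,0,1\}$, so that $N \geq -2$ and the cleaner bound $n \geq 5d+3$ suffices; any sign pattern is then hit by a single uniform leading coefficient.
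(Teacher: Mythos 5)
Your proposal is correct and matches the paper's (implicit) argument: the paper simply states that the Corollary follows from Theorem~\ref{thm}, and the intended deduction is exactly yours---take $a_i=s_i\in\{-1,0,1\}$ so that $N\geq -2$, and apply case~(1) with $n=5d+3$.
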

This corollary says that all sign patterns of the coefficients of Hilbert polynomials
are possible.

Before proving Theorem \ref{thm}, we consider when a monomial $f(x)\in \ZZ[x]$ is a Hilbert.
Now, we collect some results that will be used in the rest of this paper.
\begin{Lemma}[{\cite[Theorem 2.5]{HPComb}}]
	\label{Mc}
	Let $f(x) \in \QQ[x]$ be such that $f(\ZZ) \subseteq \ZZ$, and let 
	$m_0,\ldots,m_d$ be the unique integers such that 
	$$f(x)=\sum\limits_{i=0}^{d}\left(\binom{x+i}{i+1}-\binom{x+i-m_i}{i+1}\right),$$
	where $d=\textnormal{deg}(f(x))$. Then $f(x)$ is a Hilbert polynomial if and only if $m_0 \geq m_1\geq \cdots \geq m_d \geq 0$.
\end{Lemma}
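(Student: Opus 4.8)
The plan is to translate the statement into Macaulay's classical characterization of Hilbert functions and then read off the monotonicity condition from an explicit Macaulay representation of the values $f(n)$. Recall that for positive integers $a,n$ there is a unique $n$-th Macaulay representation
\[
a=\binom{k_n}{n}+\binom{k_{n-1}}{n-1}+\cdots+\binom{k_\delta}{\delta},\qquad k_n>k_{n-1}>\cdots>k_\delta\ge\delta\ge1,
\]
and one sets $a^{\langle n\rangle}=\binom{k_n+1}{n+1}+\cdots+\binom{k_\delta+1}{\delta+1}$. By Macaulay's theorem a sequence $(h_n)$ with $h_0=1$ is a Hilbert function if and only if $0\le h_{n+1}\le h_n^{\langle n\rangle}$ for all $n\ge1$; hence a polynomial $f$ of degree $d$ with positive leading term is a Hilbert polynomial if and only if the \emph{asymptotic growth bound} $f(n+1)\le f(n)^{\langle n\rangle}$ holds for all $n\gg0$ (necessity is Macaulay's inequality applied to a witnessing algebra; for sufficiency one prepends an initial Hilbert function that grows rapidly, e.g.\ that of a polynomial ring in many variables, which eventually dominates the fixed polynomial $f$). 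I would first record existence and uniqueness of the $m_i$: since $\binom{x+i}{i+1}-\binom{x+i-m_i}{i+1}$ has degree $i$ with leading coefficient $m_i/i!$, the representation is triangular in degree, so $m_d,\dots,m_0$ are determined successively.

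For the \emph{if} direction, assume $m_0\ge m_1\ge\cdots\ge m_d\ge0$. Using $\binom{x+i}{i+1}-\binom{x+i-m_i}{i+1}=\sum_{t=1}^{m_i}\binom{x+i-t}{i}$ and $\binom{n+i-t}{i}=\binom{(n-t)+i}{\,n-t\,}$, I would write, for $n\gg0$,
\[
f(n)=\sum_{t\ge1}\ \sum_{i\in I_t}\binom{(n-t)+i}{\,n-t\,},\qquad I_t=\{i:m_i\ge t\}.
\]
Monotonicity makes each $I_t$ the down-set $\{0,1,\dots,c_t\}$ with $c_t=\max\{i:m_i\ge t\}$, so the hockey-stick identity $\sum_{i=0}^{c}\binom{j+i}{j}=\binom{j+c+1}{j+1}$ collapses the inner sum to give $f(n)=\sum_{t=1}^{m_0}\binom{(n-t)+c_t+1}{(n-t)+1}$. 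Here the lower indices $n-t+1$ strictly decrease and, since $c_t$ is non-increasing, the upper indices $n-t+c_t+1$ strictly decrease as well, so this is exactly the $n$-th Macaulay representation of $f(n)$. Applying $\langle n\rangle$ termwise and comparing with the same formula at $n+1$ yields $f(n)^{\langle n\rangle}=f(n+1)$, so the growth bound holds and $f$ is a Hilbert polynomial.

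For the \emph{only if} direction I would induct on $d$ via $\Delta f(x)=f(x)-f(x-1)$. A short computation with $\binom{x+i}{i+1}-\binom{x-1+i}{i+1}=\binom{x-1+i}{i}$ and reindexing (the $i=0$ contribution cancels) shows $\Delta f$ has representation data $(m_1,\dots,m_d)$. If $f$ is the Hilbert polynomial of $R$, then after extending to an infinite base field a generic linear form $\theta$ avoids every prime in $\Ass(R)$ except $\mm$, so $(0:_{R}\theta)$ has finite length and $\dim_k(R/\theta R)_n=\dim_k R_n-\dim_k R_{n-1}$ for $n\gg0$; thus $\Delta f$ is the Hilbert polynomial of $R/\theta R$. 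By the inductive hypothesis $(m_1,\dots,m_d)$ is monotone, and it remains only to force $m_0\ge m_1$.

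The main obstacle is this last step: if $(m_1,\dots,m_d)$ is monotone but $m_0<m_1$, show $f$ is not a Hilbert polynomial. Now, for $m_0<t\le m_1$ the set $I_t=\{1,\dots,c_t\}$ fails to be a down-set, and collecting terms gives $f(n)=G(n)-(m_1-m_0)$, where $G$ is the monotone polynomial obtained by replacing $m_0$ with $m_1$; by the \emph{if} computation $G$ is in canonical form with $G(n)^{\langle n\rangle}=G(n+1)$, and all its lower indices are large, namely $\ge n-m_1+1$. Subtracting the fixed positive constant $\delta=m_1-m_0$ from $G(n)$ cannot shorten any top term, so it must shatter the bottom binomial $\binom{(n-m_1+1)+c_{m_1}}{\,n-m_1+1\,}$ into a string of unit terms $\binom{s}{s}=1$. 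Since raising a unit term contributes nothing, $f(n)^{\langle n\rangle}$ drops below $G(n)^{\langle n\rangle}=G(n+1)$ by strictly more than $\delta$, so $f(n)^{\langle n\rangle}<G(n+1)-\delta=f(n+1)$ for all $n\gg0$, violating the growth bound. The delicate point, and the step I expect to require the most care, is to quantify this shattering precisely: to verify that splitting a pure binomial of large lower index and then raising loses strictly more than the constant $\delta$ that was removed.
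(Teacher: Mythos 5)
The paper does not prove this lemma at all: it is quoted verbatim from Brenti \cite{HPComb} (Theorem 2.5, there attributed to Macaulay), so there is no internal argument to measure yours against. Judged on its own terms, your proof follows the classical route and is essentially correct. The reduction of ``Hilbert polynomial'' to the asymptotic Macaulay bound $f(n+1)\le f(n)^{\langle n\rangle}$ for $n\gg0$ is standard (necessity from Macaulay's theorem, sufficiency by splicing onto the Hilbert function of a polynomial ring in enough variables --- do check the inequality at the splice point). The telescoping $\binom{x+i}{i+1}-\binom{x+i-m_i}{i+1}=\sum_{t=1}^{m_i}\binom{x+i-t}{i}$, the hockey-stick collapse to $f(n)=\sum_{t=1}^{m_0}\binom{n-t+c_t+1}{n-t+1}$, the verification that this is the canonical $n$-th representation for $n\ge m_0$, and the termwise identity $f(n)^{\langle n\rangle}=f(n+1)$ are all right. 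The converse's reduction to $m_0\ge m_1$ via a generic linear form and $\Delta f$ is also sound; note only that the base case $d=0$ (where the condition is just $m_0\ge0$) should be recorded, and that for $d\ge1$ monotonicity of $(m_1,\dots,m_d)$ together with $\deg f=d$ forces $m_1\ge m_d\ge1$, so $\delta=m_1-m_0\ge1$ whenever $m_0<m_1$.

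The one step you flag as delicate does go through, and it can be made exact rather than estimated. If the smallest term of the $n$-th Macaulay representation of $a$ is $\binom{k}{j}$ with $k\ge j$, then the hockey-stick identity $\binom{k}{j}-1=\sum_{i=1}^{j}\binom{k-j-1+i}{i}$ gives the representation of $a-1$, and applying $\langle n\rangle$ termwise yields $(a-1)^{\langle n\rangle}=a^{\langle n\rangle}-1-(k-j)$. For $a=G(n)$ the bottom term has $k-j=c_{m_1}\ge1$ (because the modified sequence has its zeroth parameter equal to $m_1$), so the first unit subtracted already costs an extra $1$, and each subsequent subtraction costs at least $1$ more; hence $f(n)^{\langle n\rangle}=(G(n)-\delta)^{\langle n\rangle}\le G(n)^{\langle n\rangle}-\delta-1=G(n+1)-\delta-1<f(n+1)$ for all large $n$, which is exactly the contradiction you need. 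So the proposal is correct once this identity is written out; it is, as far as one can tell, the same Macaulay-representation argument that underlies the cited source rather than an alternative route.
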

This is a numerical characterization of Hilbert polynomials.
We call the integers $m_0,\ldots,m_d$ uniquely determined by Lemma \ref{Mc} the \textit{Macaulay parameters} of $f(x)$, and we write $M(f(x))=(m_0,\dots,m_d)$.

\begin{Lemma}[{\cite[Theorem 3.5]{HPComb}}]
	\label{key}
	Let $f(x),g(x) \in \QQ[x]$ be two Hilbert polynomials and let $t$ be a positive integer.
	Then the following are Hilbert polynomials:
	\begin{itemize}
		\item[(i)] $f(x)+g(x)$;
		\item[(ii)] $f(x)g(x)$;
		\item[(iii)] $tf(x)$;
	\end{itemize}
	\end{Lemma}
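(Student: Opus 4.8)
The plan is to pass to the algebraic meaning of the statement: a polynomial $h(x)\in\QQ[x]$ is a Hilbert polynomial exactly when there is a standard graded $k$-algebra $A$ with $P_A=h$. So I would fix standard graded $k$-algebras $R$ and $S$ with $P_R=f$ and $P_S=g$; since these are generated over $k$ by their degree-one parts we automatically have $R_0=S_0=k$ and $\dim_k R_1,\dim_k S_1<\infty$. For each of the three operations I would then exhibit an explicit standard graded $k$-algebra whose Hilbert polynomial is the claimed one and read off the conclusion from the definition.

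For (i) I would take the fiber product $T=R\times_k S$, namely the graded $k$-algebra with $T_0=k$ and $T_i=R_i\oplus S_i$ for $i\ge 1$, in which the product of a positive-degree element of $R$ with a positive-degree element of $S$ is set to zero. Since $R$ and $S$ are standard, $T_1=R_1\oplus S_1$ generates $T$, so $T$ is a finitely generated standard graded $k$-algebra; and as $H(T;i)=H(R;i)+H(S;i)$ for every $i\ge 1$, we get $P_T=f+g$, which proves (i). Part (iii) then follows immediately by induction on $t$, writing $tf=f+(t-1)f$.

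For (ii) I would use the Segre product $T=R*S$, defined by $T_i=R_i\otimes_k S_i$ with multiplication induced componentwise, $(r\otimes s)(r'\otimes s')=(rr')\otimes(ss')$. The key point is that $T$ is again generated in degree one: given $a\in R_2$ and $b\in S_2$, standardness lets me write $a=\sum_j r_jr_j'$ and $b=\sum_\ell s_\ell s_\ell'$, whence $a\otimes b=\sum_{j,\ell}(r_jr_j')\otimes(s_\ell s_\ell')=\sum_{j,\ell}(r_j\otimes s_\ell)(r_j'\otimes s_\ell')\in T_1\cdot T_1$, so $T_1\cdot T_1=R_2\otimes_k S_2=T_2$, and the same device gives $T_1\cdot T_{i-1}=T_i$ by induction. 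Since $T_1=R_1\otimes_k S_1$ is finite-dimensional, $T$ is a finitely generated standard graded $k$-algebra with $H(T;i)=H(R;i)\cdot H(S;i)$, hence $P_T=fg$, which proves (ii).

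The step I expect to demand the most care is precisely this verification in (ii) that the Segre product is generated in degree one: here one must control the interaction of the two gradings and, in particular, check that the restricted set of products $(r_jr_j')\otimes(s_\ell s_\ell')$ already spans $R_i\otimes_k S_i$ rather than merely some proper subspace. Once that is settled the Hilbert-function computations and the standardness checks are formal, and the sum and scalar multiple in (i) and (iii) are essentially immediate consequences of the fiber-product construction.
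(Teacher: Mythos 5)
Your proof is correct, but it cannot be compared line-by-line with anything in the paper: the paper offers no proof of this lemma at all, quoting it verbatim as Theorem~3.5 of Brenti's \emph{Hilbert polynomials in Combinatorics} and using it as a black box. What you have written is a self-contained algebraic argument that fills in that citation. The two constructions you use are the standard ones: the fiber product $R\times_k S$ (equal to $k$ in degree $0$ and $R_i\oplus S_i$ in positive degrees, with cross-products of positive-degree elements killed) adds Hilbert functions in all positive degrees, hence adds Hilbert polynomials, and your observation that the degree-$0$ discrepancy $1\neq 2$ is irrelevant to the eventual polynomial is exactly the right point to make; the Segre product $\bigoplus_i R_i\otimes_k S_i$ multiplies Hilbert functions, and your verification that it is generated in degree one --- expanding $a=\sum_j r_ja_j$, $b=\sum_\ell s_\ell b_\ell$ and regrouping $a\otimes b=\sum_{j,\ell}(r_j\otimes s_\ell)(a_j\otimes b_\ell)$ --- is complete, since such decomposable tensors span $R_i\otimes_k S_i$. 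Part (iii) by iterating (i) is immediate, and the degenerate case of the zero polynomial causes no trouble since $k$ itself realizes it. In short: the paper delegates this statement to the literature, and your argument is a correct, essentially standard proof of it; note only that Brenti's own treatment sits inside a broader numerical framework (Macaulay-type characterizations of Hilbert functions), whereas your route is purely by exhibiting explicit standard graded algebras, which is arguably more elementary and better matched to the definition of Hilbert polynomial used in this paper.
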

	This lemma gives some fundamental operations on polynomials that preserve 
	the property of being Hilbert polynomials.
	
	By using lemmas, we obtain the following proposition.
	\begin{Proposition}\label{monomial}
		Let $d$ be a non-negative integer and  $t$  a positive integer.
		We set $f(x)=kx^d$.
		Then $f(x)$ is a Hilbert polynomial if and only if one of the followings is satisfied:
		\begin{itemize}
			\item[(i)] $d=0$;
			\item[(ii)] $d=1$ and $t \geq 3$;
			\item[(iii)] $d=2$ and $t \geq 3$;
			\item[(iv)] $d \geq 3$.
		\end{itemize}
	\end{Proposition}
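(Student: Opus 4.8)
The plan is to read the statement with $f(x)=tx^{d}$ and to verify the claim in each case by computing the Macaulay parameters $M(f(x))=(m_0,\dots,m_d)$ and applying the criterion of Lemma \ref{Mc}: $f(x)$ is a Hilbert polynomial if and only if $m_0\ge m_1\ge\cdots\ge m_d\ge 0$. In every case the top parameter is forced by the leading term. Since the $i$-th summand $\binom{x+i}{i+1}-\binom{x+i-m_i}{i+1}$ in Lemma \ref{Mc} has degree $i$, only the $i=d$ term contributes to the coefficient of $x^{d}$, and that coefficient equals $m_d/d!$; matching it against $t$ gives $m_d=t\cdot d!$.

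The low cases are short computations. For $d=0$ we get $m_0=t\ge 0$, so $f(x)$ is always a Hilbert polynomial. For $d=1$ one finds $M(tx)=\bigl(\binom{t}{2},\,t\bigr)$, so the criterion reduces to $\binom{t}{2}\ge t$, which holds precisely when $t\ge 3$. For $d=2$, peeling the top two terms yields $M(tx^{2})=(m_0,\,2t^{2}-2t,\,2t)$ for an explicit $m_0$; the inequality $m_1\ge m_2$ already forces $t\ge 2$, and the remaining inequality $m_0\ge m_1$ pins the threshold at $t\ge 3$, the values $t=1,2$ failing by inspection while $m_0$ (growing like $2t^{4}$) dominates $m_1$ once $t\ge 3$. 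This settles (i)--(iii), including the necessity of $t\ge 3$ for $d=1,2$.

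For $d\ge 3$ I would first reduce to $t=1$: by Lemma \ref{key}(iii) it is enough to show that $x^{d}$ itself is a Hilbert polynomial, for then $tx^{d}=t\cdot x^{d}$ is one for every positive integer $t$. A direct construction from Lemma \ref{key} does not seem available here --- products and positive integer combinations of the standard Hilbert polynomials $\binom{x+j}{j}$ all have positive constant term, while $x^{d}$ does not --- so the verification must go through Lemma \ref{Mc}. I would run the peeling process: set $g_d=x^{d}$ and $g_{j-1}=g_j-\bigl(\binom{x+j}{j+1}-\binom{x+j-m_j}{j+1}\bigr)$ with $m_j=j!\,\mathrm{lc}(g_j)$, where $\mathrm{lc}$ denotes the leading coefficient. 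The chain condition $m_0\ge\cdots\ge m_d$ then becomes $\mathrm{lc}(g_{j-1})\ge j\,\mathrm{lc}(g_j)$ for $1\le j\le d$ (positivity of the $\mathrm{lc}(g_j)$ being automatic once $\mathrm{lc}(g_d)=1$), and the first step is exactly $d!\ge d+2$, which holds for $d\ge 3$.

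The main obstacle is controlling the whole chain rather than just its first step: the remainder $g_{j-1}$ depends on all of $g_j$ through its subleading coefficients, not only on $\mathrm{lc}(g_j)$, so the one-term recursion does not close. I expect to resolve this by carrying enough data along the peeling --- tracking, say, the two top coefficients of each $g_j$, or equivalently producing the explicit Gotzmann--Macaulay sum representation of $x^{d}$ with weakly decreasing parameters --- and then checking $\mathrm{lc}(g_{j-1})\ge j\,\mathrm{lc}(g_j)$ uniformly in $j$. Since the leading coefficients grow very fast as one peels downward (already $m_{d-1}=\tfrac12 d!(d!-d)$ dwarfs $m_d=d!$), these inequalities should hold with ample room; making this growth estimate precise and monotone across all $j$ is the technical heart of the argument.
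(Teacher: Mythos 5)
Your treatment of the cases $d=0,1,2$ is correct and complete: the computations $M(tx)=\bigl(\binom{t}{2},t\bigr)$ and $M(tx^2)=(m_0,2t^2-2t,2t)$ check out, and they settle both sufficiency and necessity of $t\ge 3$ in one stroke. (This is in fact cleaner than the paper, which only computes the Macaulay parameters of $tx$ and $tx^2$ for $t\le 5$ and then reaches all $t\ge 3$ by writing $t$ as a sum of $3$'s, $4$'s and $5$'s via Lemma \ref{key}(i).)

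The genuine gap is case (iv). The reduction to $t=1$ via Lemma \ref{key}(iii) is fine, but the remaining claim --- that $x^d$ is a Hilbert polynomial for every $d\ge 3$ --- is exactly the hard content of that case, and you do not prove it: you verify only the first inequality of the chain, $m_{d-1}\ge m_d$, i.e.\ $d!\ge d+2$, and then state that controlling the rest of the peeling is ``the technical heart of the argument'' which you ``expect to resolve.'' As you yourself observe, the recursion does not close on leading coefficients alone, and positivity of the successive leading coefficients is not automatic either; without a uniform estimate across all $j$ the argument is incomplete. The paper sidesteps this entirely by citing the known result \cite[Theorem 3.8]{HPComb} (Brenti), which states that $x^d$ is a Hilbert polynomial if and only if $d=0$ or $d\ge 3$ --- note this citation also delivers the necessity statements for $x$ and $x^2$ that you instead obtain from your own parameter computations. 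To repair your proof, either invoke that theorem or actually carry out the full Macaulay-parameter computation for $x^d$; as it stands, the central assertion of case (iv) is asserted rather than proved.
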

	\begin{proof}
		It is known that when $d=0$ or $d \geq 3$, $x^d$ is a Hilbert polynomial and $x$ and $x^2$ are not Hilbert polynomials. (See \cite[Theorem 3.8]{HPComb}.)
		By computing the Macaulay parameters of $2x,3x,4x,5x$ and $2x^2,3x^2,4x^2,5x^2$,
		from Lemma \ref{Mc} we know $2x,2x^2$ are not Hilbert polynomials and  $3x,4x,5x,3x^2,4x^2,5x^2$ are Hilbert polynomials.
		Hence by Lemma \ref{key}, the assertion follows.
	\end{proof}
Finally, we prove Theorem \ref{thm}.
\begin{proof}[Proof of Theorem \ref{thm}]
It is easy to show that $2x+1$ and $5x-5$ are Hilbert polynomials.
Hence, by Lemma \ref{key} and Proposition \ref{monomial},
it follows that $(5x-5)(dx^{d-1}+\cdots+2x+1)=5(dx^{d}-x^{d-1}-\cdots-1)$ is also a Hilbert polynomial.

Assume that $N \geq -2$.
Then we know that $5dx^d+a_{d-1}x^{d-1}+\cdots+a_0$ is a Hilbert polynomial.
Hence, for any integer $n \geq d+3$,  $nx^d+a_{d-1}x^{d-1}+\cdots+a_0$ is a Hilbert polynomial.

Next, assume that $N < -2$.
We set $M=-\lfloor N/5 \rfloor$.
Then since $-10M \leq 2N < 2N+3 \leq N$,
it follows that $10dMx^d+a_{d-1}x^{d-1}+\cdots+a_0$ is a Hilbert polynomial. 
Hence for any integer $n \geq 10dM+3=-10d\lfloor N/5 \rfloor+3$,  $nx^d+a_{d-1}x^{d-1}+\cdots+a_0$ is a Hilbert polynomial, as desired.
\end{proof}

\begin{acknowledgement}
	I would like to thank Professor Je\'{s}us A. De Loera for suggesting me to study the sign patterns of the coefficients of Hilbert polynomials.
\end{acknowledgement}


\end{document}